\newtheorem{theorem}{Theorem}[section]
\newtheorem{lemma}[theorem]{Lemma}
\theoremstyle{definition}
\theoremstyle{remark}
\newtheorem{remark}[theorem]{Remark}
\numberwithin{equation}{section}
\newcommand{\NN}{\mathbb{N}}
\newcommand{\RR}{\mathbb{R}}
\newcommand{\ZZ}{\mathbb{Z}}
\newcommand{\OO}{\mathcal{O}}
\newcommand{\pa}{\partial}
\newcommand{\om}{\omega}
\newcommand{\AAA}{\mathcal{A}}
\newcommand{\teta}{\tilde{\eta}}
\newcommand{\Hnu}{H_n^{(1)}}
\newcommand{\Hnd}{H_n^{(2)}}
\newcommand{\Ccal}{\mathcal{C}}
\newcommand{\Dcal}{\mathcal{D}}
\newcommand{\ee}{\mathbf{e}}
\DeclareMathOperator{\RE}{\mathrm{Re}}
\begin{document}

\title[Convergence results for a constrained optimization problem]{Helmholtz equation in unbounded domains: some convergence results for a constrained optimization problem}

\author{Giulio Ciraolo}
\address{Dipartimento di Matematica e Informatica, Universit\`a di Palermo, Via Archirafi 34, 90123 Palermo, Italy}
\email{giulio.ciraolo@unipa.it}
\thanks{The paper was completed while the author was visiting \lq\lq The Institute for Computational Engineering and Sciences\rq\rq (ICES) of The University of Texas at Austin, and he wishes to thank the Institute for hospitality and support. The author has been also supported by the NSF-DMS Grant 1361122, the \lq\lq Gruppo Nazionale per l'Analisi Matematica, la Probabilit\`a e le loro Applicazioni\rq\rq (GNAMPA) of the Istituto Nazionale di Alta Matematica (INdAM) and the Firb project 2013 ``Geometrical and Qualitative aspects of PDE''}
\subjclass{35J05, 35P25, 65N15}

\date{}

\keywords{Helmholtz equation, Transparent boundary conditions, Minimization of integral functionals}

\begin{abstract}
We consider a constrained optimization problem arising from the study of the Helmholtz equation in unbounded domains. The optimization problem provides an approximation of the solution in a bounded computational domain. In this paper we prove some estimates on the rate of convergence to the exact solution.
\end{abstract}

\maketitle

\section{Introduction}
In this paper, we consider a constrained optimization problem which arises from the computational study of wave propagation in unbounded domains. We are interested in a classical scattering problem, which can be stated as follows. Let $D \subset \RR^d$, $d\geq 2$, be a bounded domain and let $u$ be the solution of
\begin{equation}\label{pb helm intro 1}
\begin{cases}
\Delta u + k^2 u =0, & \textmd{in } \RR^d \setminus \overline{D},\\
u=f, & \textmd{on } \pa D, \\
\lim\limits_{r\to +\infty} r^{\frac{d-1}{2}} \big(\frac{\pa u}{\pa r} - iku \big) =0. &
\end{cases}
\end{equation}
It is well-known that the solution of problem \eqref{pb helm intro 1} can be written explicitly in terms of layer potentials (see \cite{AK} for instance). A challenging problem in real applications is how to approximate the solution of \eqref{pb helm intro 1} in a bounded computational domain $\Omega$, with $D\subset \Omega$. Usually, the goal is to prescribe \emph{transparent boundary conditions} on $\pa \Omega$ in such a way that the corresponding solution approximates the exact solution on a good fashion.

Many methods have been studied and the research on this topic is still very active (see for instance \cite{Be,Ber,CGS,EM,Gi,GrK,Har,Ih,KG,MTH,SS} and references therein).

In a recent paper \cite{CGS}, the authors studied a new approach to the problem of transparent boundary conditions which is based on the minimization of an integral functional arising from the radiation condition at infinity.  The approach in \cite{CGS} works under quite general assumptions on the index of refraction. Indeed, it applies to the study of the Helmholtz equation
\begin{equation}\label{helm intro n variable}
\Delta u + k^2 n(x)^2 u = 0 \quad \textmd{in } \RR^d \setminus \overline{D},
\end{equation}
where the index of refraction $n$ may have some angular dependency at infinity, i.e. $n(x) \to n_{\infty}(x/|x|)$ as $|x| \to +\infty$, as well as some (unbounded) perturbations. The novelty of the method is that it is not based on the knowledge of the exact solution in some exterior domain, but it relies on a different formulations of the radiation condition at infinity (see \cite{PV}); indeed, under suitable assumptions on $n$, the uniqueness of the solution for  \eqref{helm intro n variable} is guaranteed by the radiation condition
\begin{equation}\label{rad cond PV general}
\int_{\RR^d\setminus D} \Big| \nabla u - iknu \frac{x}{|x|} \Big|^2 \frac{dx}{1+|x|} < \infty.
\end{equation}
When a computational domain $\Omega$ is considered, one can try to approximate the solution of \eqref{helm intro n variable}-\eqref{rad cond PV general} by the minimizer $u_\Omega$ of the following constrained optimization problem
\begin{multline}\label{Pb optimiz general}
\textmd{min } \ J_\Omega (v) = \int_{\Omega \setminus D} \Big| \nabla v - iknv \frac{x}{|x|} \Big|^2 \frac{dx}{1+|x|} ,\\
\textmd{where } \ \Delta v + k^2 n(x)^2 v = 0 \quad \textmd{in } \Omega \setminus \overline{D}, \ v=f \textmd{ on } \pa D.
\end{multline}
In \cite{CGS} it was proven that, if $\Omega = B_R$ (a ball of radius $R$ centered at the origin and containing the scatterer), then the minimizer $u_{B_R}$ of \eqref{Pb optimiz general} converges in $H^1_{loc}$ norm to the solution of \eqref{helm intro n variable}-\eqref{rad cond PV general} as $R \to +\infty$.

As already mentioned, this approach works under very general assumptions on $n$ which are not covered by classical methods available in literature (at list in a standard way). Other advantages of this method are: (i) it works for very general choices of $n$ and $\Omega$ \cite{CGS}; (ii) it is of easy implementation since it consists in minimizing a quadratic functional subject to a linear constrain; (iii) it is suitable to be generalized to more general settings, such as the waveguide's case by using the results in \cite{Ci1}--\cite{CM2},\cite{MS}.

If one considers the problem in its full generality, a rigorous study of the rate of convergence properties of this problem appears to be difficult. In the present paper, we shall study the rate of convergence of this approach in the simplest case possible: $n\equiv 1$, $d=2$, $D=B_{R_0}$ and $\Omega=B_R$, with $R_0<R$. It results that the rate of the $H^1_{loc}$ norm convergence to the exact solution is $R^{-1}$ as $R\to +\infty$. Compared to the existing methods in literature, for $n\equiv 1$ this approach gives a slower rate of convergence. However, we believe that the understanding of this simple case gives a hint on the rate of convergence for much more general indexes of refraction, for which the method is more suitable (see also the numerical studies in \cite{CGS2}).

The paper is organized as follows. In Section \ref{section 2} we state the problem, recall the main results in \cite{CGS} and prove some preliminary results. In Section \ref{section 3}, we find an explicit representation of the solution by means of Fourier series. As a consequence, we obtain the convergence estimates.

\section{Preliminaries} \label{section 2}
In this section we introduce some notation and recall some results from \cite{CGS} which will be useful in the rest of the paper. Some preliminary results will also be proven.

Let $R_0>0$ be fixed and let $\psi$ be the solution of
\begin{equation}\label{helm f rad cond}
\begin{cases}
\Delta \psi + k^2 \psi = 0  & \textmd{in } \RR^2 \setminus \bar{B}_{R_0},\\
\psi=f  & \textmd{on } \pa B_{R_0},\\
\displaystyle\lim_{r\to+\infty} r^{\frac{1}{2}} \left( \frac{\pa \psi}{\pa r} - i k \psi \right) =0, &
\end{cases}
\end{equation}
where $r:=|x|$, $f \in L^2(\pa B_{R_0})$. We consider the polar coordinates $x = r (\cos \omega,\sin \omega)$, where $r=|x|$ and $\omega\in [0,2\pi)$, so that
\begin{equation*}
\Delta u = \frac{1}{r} \frac{\pa}{\pa r} \left(r \frac{\pa u}{\pa r} \right) + \frac{1}{r^2} \frac{\pa^2 u}{\pa \om^2},
\end{equation*}
and
\begin{equation}\label{grad polar coord}
\nabla u = \frac{\pa u}{\pa r} \ee_r + \frac{1}{r}  \frac{\pa u}{\pa \omega} \ee_\omega,
\end{equation}
where $\ee_r = x/|x|$ and $\ee_\omega=(-x_2,x_1)/|x|$.

By separating the variables, a solution $u$ of $\Delta u +k^2u=0$ can be written as
\begin{equation}\label{u separated}
u(r,\omega) = \sum_{z \in\ZZ} [a_n J_n(kr)+b_n Y_n(kr)] e^{in\om};
\end{equation}
here $J_n(r)$ and $Y_n(r)$ are, respectively, the Bessel and Neumann functions of order $n$ and they satisfy (see \cite{AS})
\begin{equation} \label{eq Bessel k1}
\frac{1}{r} \left(r \phi' (r) \right)' + \Big( 1 - \frac{n^2}{r^2} \Big) \phi (r) = 0, \quad r \in (0,+\infty).
\end{equation}
We also recall that the Hankel functions of order $n$ and type $1$ and $2$ are given by
\begin{equation}\label{hankel}
\Hnu(r)=J_n(r)+iY_n(r),\quad \Hnd(r)=J_n(r)-iY_n(r).
\end{equation}
By using these notations, the solution $\psi$ to problem \eqref{helm f rad cond} is given by
\begin{equation}\label{psi spher harm}
\psi(x)= \sum_{n\in \NN} f_n \frac{\Hnu(k|x|)}{\Hnu(kR_0)} e^{in \om},
\end{equation}
where
\begin{equation*}
f_n= \frac{1}{2\pi}\int_0^{2\pi} f(\omega)e^{-in\om} d\omega.
\end{equation*}
Here, we used the fact that $\Hnu$ satisfies
\begin{equation*}
    \lim_{r\to \infty} r^{\frac{1}{2}}(\Hnu{}'(r)-i\Hnu(r))=0,
\end{equation*}
which implies that the outgoing solution of \eqref{helm f rad cond} can be written in terms of $\Hnu$, $n\in\NN$.

In \cite{CGS} the authors proposed a method for approximating $\psi$ on
\begin{equation} \label{Ar}
\AAA_R:= B_R \setminus \bar{B}_{R_0},
\end{equation}
which is based on the following minimization problem:
\begin{multline}\label{Pb 1}
\textmd{Minimize } J_{R}(u):= \int_{\AAA_R} \Big{|} \nabla u - iku \frac{x}{|x|} \Big{|}^2 dx, \\ \textmd{where } \Delta u + k^2 u = 0 \textmd{ in } \AAA_R, \ u=f \textmd{ on } \pa B_{R_0}.
\end{multline}
We will denote by $u_{\AAA_R}$ the minimizer of \eqref{Pb 1} (see \cite{CGS} for the existence and uniqueness of the minimizer).
As already mentioned in the introduction, the problem considered in \cite{CGS} is much more general than problem \eqref{helm f rad cond} both for the choice of the domain and for the coefficient $n$, which here is fixed to be $n\equiv 1$ while in \cite{CGS} may have angular dependence as well as perturbations.

The reader will notice that the functional in \eqref{Pb 1} differs from the one mentioned in the Introduction in the absence of the weight $(1+|x|)^{-1}$. However, the two integral formulations of the radiation condition are equivalent when $n\equiv 1$, as follows from an asymptotic expansion at infinity of the solution (see also Section 3 in \cite{CGS}). The choice of the functional without the weight is just to simplify the computations. In the present paper we will deal only with a constant index of refraction, since in this case we know the explicit solution and accurate convergence results can be obtained analytically.

The main results in \cite{CGS} were: (i) the existence and uniqueness of the minimizer $u_{\AAA_R}$ for \eqref{Pb 1}; (ii) $u_{\AAA_R} \to \psi$ in $H^1_{loc}$ norm as $R \to +\infty$. We summarize these results in the following theorem (the results are stated for the particular case studied in this paper).


\begin{theorem}
Let $\psi$ be given by \eqref{psi spher harm}. We have the following results
\begin{itemize}
\item[(i)] for any $R>R_0$ there exists a unique minimizer $u_{\AAA_R}$ of Problem \eqref{Pb 1};
\item[(ii)] $u_{\AAA_R}$ is a solution of
\begin{equation}\label{Helm f}
\Delta u + k^2 u =0 \quad \textmd{in } \AAA_R,\quad u=f \textmd{ on } \pa B_{R_0};
\end{equation}
\item[(iii)] the minimizer of \eqref{Pb 1} converges to $\psi$ as $R\to + \infty$ in the $H_{loc}^1$ norm, that is: for any fixed $\rho> R_0$, we have that
\begin{equation*}
\lim_{R\to +\infty} \| u_{\AAA_R} - \psi \|_{H^1(\AAA_\rho)} = 0.
\end{equation*}
\end{itemize}
\end{theorem}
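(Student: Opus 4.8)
The plan is to exploit separation of variables, which decouples the functional $J_R$ into independent contributions from each Fourier mode. Writing an admissible competitor as $u(r,\om)=\sum_{n\in\ZZ} u_n(r)e^{in\om}$ and using the polar expression \eqref{grad polar coord}, one has $|\nabla u - iku\,\ee_r|^2 = |\pa_r u - iku|^2 + r^{-2}|\pa_\om u|^2$, so that by orthogonality of $\{e^{in\om}\}$ the cross terms integrate to zero and
\begin{equation*}
J_R(u)=2\pi\sum_{n\in\ZZ}\int_{R_0}^{R}\Big[\,|u_n'(r)-iku_n(r)|^2+\frac{n^2}{r^2}|u_n(r)|^2\,\Big]\,r\,dr.
\end{equation*}
The constraint $\Delta u+k^2u=0$ forces each radial profile $u_n$ to solve the Bessel equation \eqref{eq Bessel k1} (after rescaling by $k$), hence $u_n(r)=c_n\Hnu(kr)+d_n\Hnd(kr)$, while the inner condition $u=f$ on $\pa B_{R_0}$ imposes the single linear constraint $u_n(R_0)=f_n$ for each $n$. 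Thus minimizing $J_R$ reduces to an independent one-parameter (complex) minimization in each mode.

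For parts (i) and (ii), I would first observe that $J_R(u)=\int_{\AAA_R}|Lu|^2\,dx$ with $Lu:=\nabla u - iku\,\ee_r$ linear, so $J_R$ is a nonnegative quadratic functional on the affine admissible class. The key positivity fact is that $Lu=0$ forces $u$ to be radial with $u=Ce^{ikr}$, and a direct substitution into \eqref{eq Bessel k1} shows that such a $u$ solves the Helmholtz equation only if $C=0$. Hence the homogeneous problem ($f\equiv 0$) has $J_R(u)=0$ only for $u\equiv0$, giving strict convexity and therefore uniqueness; existence follows mode-by-mode, each reduced functional being a coercive positive-definite quadratic in the free parameter $d_n$, together with the summability estimate that places the assembled series in $H^1(\AAA_R)$, which is the functional-analytic content established in \cite{CGS}. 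Part (ii) is then immediate, since every competitor---in particular the minimizer---lies by definition in the class of Helmholtz solutions with $u=f$ on $\pa B_{R_0}$.

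For the convergence in (iii), I would compute the optimal free parameter explicitly: setting to zero the derivative of the $n$-th reduced functional yields $d_n=d_n(R)$ as a ratio of Hankel-function integrals over $[R_0,R]$. Using the radiation property of $\Hnu$ recalled after \eqref{psi spher harm}, one expects $d_n(R)\to0$ as $R\to+\infty$, so that the optimal mode $u_n^{(R)}(r)$ converges to $f_n\,\Hnu(kr)/\Hnu(kR_0)$, which is precisely the $n$-th mode of $\psi$ in \eqref{psi spher harm}. Fixing $\rho>R_0$, I would then pass from this mode-by-mode convergence to convergence of $\|u_{\AAA_R}-\psi\|_{H^1(\AAA_\rho)}$ by producing a summable majorant, uniform in $R$, for the mode contributions on the fixed annulus $\AAA_\rho$, and invoking dominated convergence.

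The hard part will be this last step: upgrading the pointwise-in-$n$ convergence to norm convergence demands uniform-in-$n$ control of the incoming amplitudes $d_n(R)$ and of the Hankel ratios $\Hnu(kr)/\Hnu(kR_0)$ on $\AAA_\rho$, which rests on the large-order and large-argument asymptotics of the Bessel and Hankel functions. This quantitative control---rather than the qualitative convergence itself---is the delicate point, and it is exactly the analysis carried out via the explicit Fourier representation in Section \ref{section 3}, where it also produces the sharp $R^{-1}$ rate.
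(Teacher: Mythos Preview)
The paper does not actually prove this theorem: it is stated as a summary of the existence, uniqueness, and $H^1_{loc}$-convergence results established in \cite{CGS} for the general variable-index problem, specialized here to $n\equiv 1$, $d=2$, and $\Omega=B_R$. So there is no in-paper proof to compare against directly.

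That said, your Fourier-decoupling strategy is precisely the mechanism the paper exploits in Section~\ref{section 3} for the \emph{quantitative} analysis (Lemma~\ref{lemma v minimizer}, Theorem~\ref{theorem v exact}, Theorem~\ref{thm main}). The only structural difference is the choice of basis: you parametrize each mode as $c_n\Hnu(kr)+d_n\Hnd(kr)$ subject to $u_n(R_0)=f_n$, whereas the paper writes $u=\psi+v$ and expands $v$ in the functions $\eta_n$ of \eqref{psi n def}, which vanish at $r=R_0$ by construction. This makes the inner boundary condition automatic and leaves a single free coefficient $v_n^R$ per mode, yielding the clean closed form \eqref{v_nR}. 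Your $d_n(R)\to 0$ is equivalent to the paper's $v_n^R\to 0$.

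One caution on your final paragraph: you assert that the uniform-in-$n$ summable majorant is ``exactly the analysis carried out \ldots\ in Section~\ref{section 3}'', but Theorem~\ref{thm main} is proved only for boundary data $f$ with \emph{finitely many} Fourier modes, and the Remark following it explicitly flags that the uniform-in-$n$ control of the Bessel asymptotics needed for general $f\in L^2$ is not supplied there. So for part~(iii) in full generality your argument, like the paper's statement, must ultimately defer to \cite{CGS}.
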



For any $u,v \in H^1(\AAA_R)$, it will be useful to define the following semidefinite positive hermitian product:
\begin{equation}\label{herm prod I}
\langle u,v \rangle_{R} = \RE \int_{\AAA_R} \nabla u \cdot \nabla \bar{v} dx,
\end{equation}
and the associated seminorm
\begin{equation}\label{seminorm def}
[u]_R= \langle u,v \rangle_{R}^{\frac{1}{2}}.
\end{equation}

We have the following Lemma.

\begin{lemma} \label{lemma herm prod}
Let $u,v \in H^1(\AAA_R)$ and let
\begin{equation*}
u(r,\om)=\sum_{n\in \ZZ} u_n(r)  e^{in\om},\quad v(r,\om)=\sum_{n\in \ZZ} v_n(r) e^{in\om}.
\end{equation*}
Then we have that
\begin{equation}\label{herm prod fourier}
\langle u,v \rangle_{R} = \sum_{n\in \ZZ} \int_{R_0}^R [\rho u_n'(\rho)\bar{v}_n'(\rho) + \frac{n^2}{\rho} u_n(\rho)\bar{v}_n(\rho)]  d\rho.
\end{equation}
\end{lemma}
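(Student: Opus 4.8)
The plan is to reduce the area integral defining $\langle u,v\rangle_{R}$ to an iterated integral in polar coordinates and then exploit the orthogonality of the angular exponentials. First I would record the pointwise form of the integrand: since $\ee_r$ and $\ee_\om$ form an orthonormal frame at each point, formula \eqref{grad polar coord} gives
\begin{equation*}
\nabla u \cdot \nabla \bar{v} = \frac{\pa u}{\pa r}\frac{\pa \bar{v}}{\pa r} + \frac{1}{r^2}\frac{\pa u}{\pa \om}\frac{\pa \bar{v}}{\pa \om},
\end{equation*}
and, writing the volume element on $\AAA_R$ as $dx = r\, dr\, d\om$ with $r\in(R_0,R)$ and $\om\in(0,2\pi)$, the integral becomes
\begin{equation*}
\int_{\AAA_R}\nabla u\cdot\nabla\bar{v}\, dx = \int_{R_0}^R\int_0^{2\pi}\Big( r\,\frac{\pa u}{\pa r}\frac{\pa \bar{v}}{\pa r} + \frac{1}{r}\,\frac{\pa u}{\pa \om}\frac{\pa \bar{v}}{\pa \om}\Big)\, d\om\, dr.
\end{equation*}

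Next I would insert the Fourier expansions. Differentiating term by term gives $\pa_r u = \sum_n u_n'(r)e^{in\om}$ and $\pa_\om u = \sum_n in\, u_n(r)e^{in\om}$, with the analogous expressions for $\bar{v}$ carrying the conjugate coefficients and exponentials $e^{-im\om}$. Performing the $\om$-integration and using the orthogonality relation $\int_0^{2\pi}e^{i(n-m)\om}\, d\om = 2\pi\,\delta_{nm}$ collapses the resulting double sum to its diagonal (the normalising constant being fixed by the Fourier convention in use). On the diagonal the radial factor contributes $\rho\, u_n'\bar{v}_n'$, while the angular derivative attaches the factor $in$ to the $n$th coefficient of $u$ and $\overline{in}=-in$ to that of $\bar{v}$, whose product $|in|^2=n^2$ produces the term $\tfrac{n^2}{\rho}u_n\bar{v}_n$. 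Taking real parts then yields exactly \eqref{herm prod fourier}.

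The step that needs care — and which I expect to be the only genuine obstacle — is justifying the term-by-term differentiation and the interchange of the series with the $\om$-integral for arbitrary $u,v\in H^1(\AAA_R)$, rather than for smooth functions. The cleanest route is to recall the characterization of $H^1(\AAA_R)$ in Fourier variables, namely that $u\in H^1(\AAA_R)$ precisely when $\sum_{n\in\ZZ}\int_{R_0}^R\big(\rho|u_n'|^2 + \tfrac{n^2}{\rho}|u_n|^2 + \rho|u_n|^2\big)\, d\rho<\infty$, so that the angular partial sums $S_N u = \sum_{|n|\le N}u_n(r)e^{in\om}$ converge to $u$ in $H^1(\AAA_R)$. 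One then verifies the identity first for these partial sums, where every sum is finite and all manipulations are elementary, and finally passes to the limit using that the sesquilinear form $(u,v)\mapsto\int_{\AAA_R}\nabla u\cdot\nabla\bar{v}\, dx$ is continuous on $H^1(\AAA_R)\times H^1(\AAA_R)$ and that the right-hand side of \eqref{herm prod fourier} is continuous with respect to the same convergence. Orthogonality guarantees that no cross terms survive in this limit, which is precisely what makes both sides depend only on the diagonal Fourier coefficients.
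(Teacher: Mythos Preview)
Your argument is correct and follows essentially the same approach as the paper: compute $\nabla u\cdot\nabla\bar v$ in polar coordinates, apply Fourier orthogonality (what the paper calls Parseval's identity together with Fubini--Tonelli), and then extend from a dense class to all of $H^1(\AAA_R)$ by continuity. The only cosmetic difference is that the paper approximates by $C^1$ functions rather than by angular Fourier partial sums, but the structure of the proof is the same.
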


\begin{proof}
Let $u,v \in C^1(\AAA_R)$. From \eqref{grad polar coord}, we obtain that
\begin{equation*}
\nabla u(r,\omega)=\sum_{n\in \ZZ} [u_n'(r) \ee_r + in \frac{u_n(r)}{r} \ee_\om]  e^{in\om},
\end{equation*}
and an analogous formula holds for $v$. Fubini-Tonelli's Theorem and Parseval's identity yield \eqref{herm prod fourier}.
If $u,v \in H^1(\AAA_R)$, then the conclusion follows from a standard approximation argument.
\end{proof}

\section{Convergence estimates} \label{section 3}
In this section we prove our main result on the convergence of the approximating solution. Our strategy is to write a minimization problem for solutions of the homogeneous Helmholtz equation which is equivalent to Problem \eqref{Pb 1} and then we use Fourier representation to obtain an explicit expression of the minimizer.

For any function $u \in H^1(\AAA_R)$, we define $U\in H^1(\AAA_R)$ as follows:
\begin{equation}\label{U def}
U(x) = e^{-ik|x|} u(x).
\end{equation}
By using this notation, the functional $J_{R}$ in \eqref{Pb 1} can be written as
\begin{equation*}
J_{R}(u)= \langle U, U \rangle_R = [U]_R^2.
\end{equation*}
In the following lemma we write a minimization problem for solutions of the homogeneous Helmholtz equation which is equivalent to Problem \eqref{Pb 1}.

\begin{lemma} \label{lemma v minimizer}
For a fixed $R>R_0$, let $u_{\AAA_R}$ be the minimizer of Problem \eqref{Pb 1} and set
\begin{equation}\label{v minimiz def}
v_{\AAA_R}=u_{\AAA_R}-\psi,
\end{equation}
with $\psi$ given by \eqref{psi spher harm}. Then, $v_{\AAA_R}$ is the unique minimizer of the following problem
\begin{equation}\label{Pb 2}
\textmd{Minimize } I_{R}(v):=\langle V+2\Psi,V \rangle_R,
\end{equation}
where $v$ is a solution of
\begin{equation} \label{Helm 0}
\begin{cases}
\Delta v+k^2v=0, & \textmd{in } \AAA_R; \\
v=0, & \textmd{on } \pa B_{R_0};
\end{cases}
\end{equation}
here $V$ and $\Psi$ are the functions associated to $v$ and $\psi$ by \eqref{U def}, respectively.
\end{lemma}

\begin{proof}
For any $u\in H^1(\AAA_R)$, we define $v\in H^1(\AAA_R)$ by $v=u-\psi$. Hence the functional $J_{R}$ in \eqref{Pb 1} is given by
\begin{equation*}
J_{R}(v+\psi)= \langle \Psi,\Psi \rangle_R + \langle V+2\Psi,V \rangle_R,
\end{equation*}
where $v$ is a solution of \eqref{Helm 0}. Since $\langle \Psi,\Psi \rangle_R$ is fixed, we conclude.
\end{proof}

Thanks to Lemma \ref{lemma v minimizer}, we can find an explicit formula for $v_{\AAA_R}$. In particular, we have the following theorem.

\begin{theorem}  \label{theorem v exact}
Let $v_{\AAA_R}$ be the minimizer of Problem \eqref{Pb 2}. Then,
\begin{equation}\label{v B_R exact}
v_{\AAA_R}(r,\omega) = \sum_{n\in\NN} v_n^R \eta_n(kr) e^{in\om} ,
\end{equation}
where
\begin{equation} \label{psi n def}
\eta_n(\rho) = Y_n(kR_0) J_n(\rho) - J_n(kR_0) Y_n(\rho),\quad \rho>0,
\end{equation}
and
\begin{equation}\label{v_nR}
v_n^R= - \frac{f_n \gamma_n^R}{c_n^R},
\end{equation}
with
\begin{equation}\label{c_nR}
c_n^R= \int_{R_0}^R \Big[ \rho k^2\eta_n'(k\rho)^2+ \Big(\rho k^2+\frac{n^2}{\rho}\Big) \eta_n(k\rho)^2 \Big] d\rho,
\end{equation}
and
\begin{multline}\label{gamma_nR}
\gamma_n^R = \frac{2}{\pi} ki(R-R_0) + \\
+ \frac{1}{\Hnu(kR_0)} \int_{R_0}^R [k^2 \rho \Hnu{}'(k\rho)\eta_n'(k\rho) + ( k^2\rho + \frac{n^2}{\rho} ) \Hnu(k\rho) \eta_n(k\rho) ] d\rho .
\end{multline}
\end{theorem}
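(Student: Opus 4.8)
The plan is to use separation of variables to reduce the minimization over the infinite–dimensional admissible class to a family of independent scalar problems, one per Fourier mode, and then to evaluate the resulting radial integrals explicitly by means of standard Bessel function identities. First I would characterise the admissible functions: any $v$ with $\Delta v + k^2 v = 0$ in $\AAA_R$ has a Fourier expansion $v(r,\om) = \sum_n v_n(r) e^{in\om}$ whose radial coefficients solve, after the rescaling $r\mapsto kr$, the Bessel equation \eqref{eq Bessel k1}; hence $v_n(r) = A_n J_n(kr) + B_n Y_n(kr)$. The homogeneous condition $v=0$ on $\pa B_{R_0}$ reads $v_n(R_0)=0$ and forces $(A_n,B_n)\parallel(Y_n(kR_0),-J_n(kR_0))$, so that $v_n(r)=v_n^R\,\eta_n(kr)$ with $\eta_n$ as in \eqref{psi n def} (note $\eta_n(kR_0)=0$). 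This already produces the claimed form \eqref{v B_R exact} and leaves only the scalars $v_n^R\in\CC$ to be determined.

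Next I would substitute this expansion, together with $V=e^{-ik|x|}v$ and the coefficients $\Psi_n=e^{-ik|x|}\psi_n$ read off from \eqref{psi spher harm}, into $I_R(v)=\langle V+2\Psi,V\rangle_R$. Because the factor $e^{-ik|x|}$ is radial it does not couple different angular modes, so Lemma \ref{lemma herm prod} diagonalises the functional. Writing $W_n(r)=e^{-ikr}\eta_n(kr)$ and denoting by $B_n(\phi,\chi)=\int_{R_0}^R[\rho\,\phi'\bar\chi'+\frac{n^2}{\rho}\phi\bar\chi]\,d\rho$ the $n$-th mode form appearing in \eqref{herm prod fourier}, and recalling that $\langle\cdot,\cdot\rangle_R$ is the real part in \eqref{herm prod I}, one has $V_n=v_n^R W_n$ and obtains $I_R(v)=\sum_n\big(c_n^R|v_n^R|^2+2\RE[\overline{v_n^R}\,d_n]\big)$, where $c_n^R=B_n(W_n,W_n)$ is real and positive and $d_n=B_n(\Psi_n,W_n)$. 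Each summand is an independent quadratic in $v_n^R$, minimized (since $c_n^R>0$) at $v_n^R=-d_n/c_n^R$, which is the origin of \eqref{v_nR}; the legitimacy of minimizing term by term is guaranteed by the existence and uniqueness of the global minimizer established in Lemma \ref{lemma v minimizer}.

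It then remains to compute $c_n^R$ and $d_n$. Differentiating gives $W_n'=k e^{-ikr}(\eta_n'-i\eta_n)$, and since $\eta_n$ is real one has $|W_n'|^2=k^2(\eta_n'^2+\eta_n^2)$ and $|W_n|^2=\eta_n^2$, which reproduces \eqref{c_nR} verbatim. For $d_n$ the same computation expands the product $(\Hnu{}'-i\Hnu)(\eta_n'+i\eta_n)$ into a real part matching the integrand of \eqref{gamma_nR} and an imaginary part governed by the cross term $\Hnu{}'\eta_n-\Hnu\eta_n'$. I expect this last term to be the only genuinely non-mechanical point, and I would handle it with the Wronskian identity $\Hnu{}'(s)\eta_n(s)-\Hnu(s)\eta_n'(s)=\tfrac{2}{\pi s}\Hnu(kR_0)$, which follows from \eqref{hankel}, \eqref{psi n def} and the classical relation $J_n(s)Y_n'(s)-J_n'(s)Y_n(s)=2/(\pi s)$. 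Evaluated at $s=k\rho$ the factor $s$ cancels the weight $k^2\rho$, and integration in $\rho$ over $(R_0,R)$ produces exactly the boundary contribution $\tfrac{2}{\pi}ki(R-R_0)$ of \eqref{gamma_nR}; thus $d_n=f_n\gamma_n^R$ and $v_n^R=-f_n\gamma_n^R/c_n^R$, as claimed. The remaining effort is the bookkeeping of the $e^{-ikr}$ factors and a check that the Fourier series converges in $H^1(\AAA_R)$, both of which are routine.
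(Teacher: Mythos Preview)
Your proposal is correct and follows essentially the same route as the paper: separation of variables to parametrize the admissible set by $v_n\,\eta_n(kr)$, diagonalisation of $I_R$ via Lemma~\ref{lemma herm prod} after multiplying by $e^{-ikr}$, termwise minimization of the resulting scalar quadratics, and the Wronskian identity $\Hnu{}'\eta_n-\Hnu\eta_n'=\tfrac{2}{\pi s}\Hnu(kR_0)$ to extract the $\tfrac{2}{\pi}ki(R-R_0)$ contribution. The only cosmetic remark is that calling the two groups of terms in $(\Hnu{}'-i\Hnu)(\eta_n'+i\eta_n)$ a ``real part'' and an ``imaginary part'' is slightly informal since $\Hnu$ is complex, but the algebra you describe is exactly what the paper does.
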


\begin{proof}
Since $u_{\AAA_R}$ solves \eqref{Helm f}, then $v_{\AAA_R}=u_{\AAA_R} - \psi$ solves \eqref{Helm 0}. By separation of variables and from the homogeneous boundary condition on $\pa B_{R_0}$, we write a solution $v$ of \eqref{Helm 0} as
\begin{equation*}
v(r,\om) = \sum_{n\in\NN} v_n \eta_n(kr) e^{in\om},
\end{equation*}
where $\eta_n$ is given by \eqref{psi n def}. Since $V(r,\om)= e^{-ikr} v(r,\om)$, then
\begin{equation*}
V(r,\om)= \sum_{n\in\NN} v_n \teta_n(kr) e^{in\om},
\end{equation*}
where we set
\begin{equation*}
\teta_n(r)= e^{-ikr} \eta_n(kr).
\end{equation*}
By letting $\Psi(r,\om)= e^{-ikr} \psi(r,\om)$, we have that
\begin{equation*}
\Psi(r,\om)= \sum_{n\in\NN} \frac{f_n\tilde{h}_n(kr)}{\Hnu(kR_0)} e^{in\om},
\end{equation*}
where
\begin{equation*}
\tilde{h}_n(r) = e^{-ikr} \Hnu(kr).
\end{equation*}
We notice that
\begin{equation*}
\tilde{\eta}_n'(\rho)= ke^{-ik\rho} ( \eta_n(k\rho)-i\eta_n(k\rho) );
\end{equation*}
from Lemma \ref{lemma herm prod} and since $ \eta_n$ is real-valued, we have that
\begin{equation}\label{V scal V}
\langle V, V \rangle_R= \sum_{n\in\NN} |v_n|^2 c_n^R,
\end{equation}
where $c_n^R$ is given by \eqref{c_nR}. Analogously, from
\begin{equation*}
\langle \Psi, V \rangle_R= \RE \sum_{n\in\NN} \frac{f_n \bar{v}_n}{\Hnu(kR_0)} \bar{v}_n \int_{R_0}^R [\rho \tilde{h}_n'(\rho) \overline{ \teta_n'(\rho) } + \frac{n^2}{\rho} \tilde{h}_n(\rho) \overline{\teta_n(\rho)}]  d\rho,
\end{equation*}
we obtain that
\begin{multline}\label{A}
\rho \tilde{h}_n'(\rho) \overline{ \teta_n'(\rho) } + \frac{n^2}{\rho} \tilde{h}_n(\rho) \overline{\teta_n(\rho)} = \\ = \rho k^2(\Hnu{}'(k\rho)-i\Hnu(k\rho))(\eta_n'(k\rho)+ik\eta_n(k\rho)) + \frac{n^2}{\rho} \Hnu(k\rho)\eta_n(k\rho).
\end{multline}
Some computations yield
\begin{equation*}
\Hnu{}'(k\rho) \eta_n(k\rho) - \Hnu(k\rho)\eta_n'(k\rho) = \Hnu(kR_0)[J_n(k\rho)Y_n'(k\rho)-J_n'(k\rho)Y_n(k\rho)],
\end{equation*}
%
and, from
\begin{equation*}
J_n(r)Y_n'(r)-J_n'(r)Y_n(r)= \frac{2}{\pi r}
\end{equation*}
(see formula 9.1.16 in \cite{AS}), we obtain that
\begin{equation} \label{B}
\Hnu{}'(k\rho)\eta_n(k\rho) - \Hnu(k\rho) \eta_n'(k\rho) =
\frac{2}{\pi k \rho} \Hnu(kR_0).
\end{equation}
From \eqref{A} and \eqref{B} we have that
\begin{equation*}
\langle \Psi, V \rangle_R= \RE \sum_{n\in\NN}  f_n \gamma_n^R \bar{v}_n,
\end{equation*}
with $\gamma_n^R$ given by \eqref{gamma_nR} and hence
\begin{equation*}
\langle V+2\Psi,V \rangle_R = \sum_{n\in\NN} c_n^R|v_n|^2+ 2 \RE f_n \gamma_n^R \bar{v}_n.
\end{equation*}
By minimizing each term of the sum we obtain \eqref{v_nR}.
\end{proof}

In order to obtain estimates on the convergence, it will be useful to write the coefficients $c_n^R$ and $\gamma_n^R$ more explicitly. We will need the following lemma.

\begin{lemma} \label{lemma tecnico bessel}
Let $\Ccal_n$ and $\Dcal_n$ be two cylinder functions, with $n\in\NN$. Then
\begin{multline}\label{int bess nuovo}
\int^r  \Big[ \rho \Ccal_n'(\rho)  \Dcal_n'(\rho) + \Big( \rho + \frac{n^2}{\rho} \Big) \Ccal_n(\rho) \Dcal_n(\rho) \Big] d\rho =\\
\\ r^2 \Big( \Ccal_n(r) \Dcal_n(r) + \Ccal_n'(r) \Dcal_n'(r)  \Big)  + r \Ccal_n'(r) \Dcal_n(r) - n^2 \Ccal_n(r) \Dcal_n(r) \, .
\end{multline}
\end{lemma}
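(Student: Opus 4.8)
The plan is to verify \eqref{int bess nuovo} by differentiating the right-hand side and checking that its derivative equals the integrand; since the identity is an antiderivative (indefinite integral) relation, this is enough, the two sides agreeing up to an additive constant. Writing $\Ccal=\Ccal_n$ and $\Dcal=\Dcal_n$ for brevity, and denoting by $F(r)$ the right-hand side of \eqref{int bess nuovo}, I would compute $F'(r)$ by the product and chain rules. This produces the expected first-order terms together with the second-derivative contributions $r^2(\Ccal''\Dcal'+\Ccal'\Dcal'')$ arising from $\frac{d}{dr}[r^2\Ccal'\Dcal']$ and the term $r\Ccal''\Dcal$ arising from $\frac{d}{dr}[r\Ccal'\Dcal]$.

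The only structural input is that $\Ccal$ and $\Dcal$ are cylinder functions, hence each solves Bessel's equation \eqref{eq Bessel k1}, which I would rewrite in the form $\rho^2\phi''+\rho\phi'+(\rho^2-n^2)\phi=0$, that is
\[
r^2\phi''(r)=-r\phi'(r)-(r^2-n^2)\phi(r),\qquad r\phi''(r)=-\phi'(r)-\Big(r-\frac{n^2}{r}\Big)\phi(r).
\]
The key step is to substitute these relations to eliminate every occurrence of $\Ccal''$ and $\Dcal''$ in $F'(r)$. After the replacement the expression involves only $\Ccal,\Dcal,\Ccal',\Dcal'$, and I would collect the terms according to the four products $\Ccal\Dcal$, $\Ccal'\Dcal'$, $\Ccal'\Dcal$ and $\Ccal\Dcal'$.

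The bookkeeping is the heart of the matter. Grouping the coefficients, I expect the mixed terms in $\Ccal'\Dcal$ and $\Ccal\Dcal'$ to cancel identically, the coefficient of $\Ccal'\Dcal'$ to reduce to $r$, and the coefficient of $\Ccal\Dcal$ to collapse to $r+n^2/r$, so that
\[
F'(r)=r\,\Ccal'(r)\Dcal'(r)+\Big(r+\frac{n^2}{r}\Big)\Ccal(r)\Dcal(r),
\]
which is precisely the integrand of \eqref{int bess nuovo}. I do not anticipate a genuine obstacle, since cylinder functions are smooth on $(0,+\infty)$ and no regularity, limiting or uniqueness issue arises; the only real risk is an arithmetic slip among the roughly ten terms generated by the differentiation and the Bessel substitution. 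To guard against sign errors I would carry out the $\Ccal''$ and $\Dcal''$ substitutions separately and verify the cancellation of the mixed products $\Ccal'\Dcal$ and $\Ccal\Dcal'$ first, as that is where a mistake in the Bessel relation would most readily show up.
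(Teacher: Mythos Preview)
Your proposal is correct and takes a genuinely different route from the paper. You verify the antiderivative identity by direct differentiation of the right-hand side, eliminating the second derivatives $\Ccal''$ and $\Dcal''$ via Bessel's equation and checking that the four coefficients collapse as claimed; this uses nothing beyond the ODE itself. The paper instead proceeds constructively: it multiplies Bessel's equation for $\Ccal_n$ by $\Dcal_n$, integrates by parts to reach
\[
\int^r\Big[\rho\,\Ccal_n'\Dcal_n'+\Big(\rho+\frac{n^2}{\rho}\Big)\Ccal_n\Dcal_n\Big]d\rho
= r\,\Ccal_n'(r)\Dcal_n(r)+2\int^r\rho\,\Ccal_n\Dcal_n\,d\rho,
\]
then invokes the handbook formula 10.22.5 of \cite{OLBC} for $\int^r\rho\,\Ccal_n\Dcal_n\,d\rho$ and the recurrence relations 10.6.2 of \cite{OLBC} to rewrite the resulting combination $\Ccal_{n-1}\Dcal_{n+1}+\Ccal_{n+1}\Dcal_{n-1}$ in terms of $\Ccal_n,\Dcal_n,\Ccal_n',\Dcal_n'$. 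Your approach is more self-contained and avoids the two external references; the paper's approach, on the other hand, shows how the identity is \emph{found} rather than merely checked, and makes transparent its relation to the classical formula for $\int^r\rho\,\Ccal_n\Dcal_n\,d\rho$.
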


\begin{proof}
We multiply the Bessel equation
\begin{equation*}
\big(r \Ccal_n'(r) \big)' + \Big( r-\frac{n^2}{r} \Big) \Ccal_n(r) =0,
\end{equation*}
times $\Dcal(r)$ and integrate. After one integration by parts, we have that
\begin{equation*}
r \Ccal_n'(r) \Dcal_n(r) - \int^r \rho \Ccal_n'(\rho) \Dcal_n'(\rho) d\rho + \int^r \Big( \rho - \frac{n^2}{\rho} \Big) \Ccal_n(\rho) \Dcal_n(\rho) d\rho = 0,
\end{equation*}
and hence
\begin{equation*}
\int^r \rho \Ccal_n'(\rho) \Dcal_n'(\rho) d\rho + \int^r \Big( \rho + \frac{n^2}{\rho} \Big) \Ccal_n(\rho) \Dcal_n(\rho) d\rho = r \Ccal_n'(r) \Dcal_n(r) + 2 \int^r \rho \Ccal_n(\rho) \Dcal_n(\rho) d\rho .
\end{equation*}
From formula 10.22.5 in \cite{OLBC} we obtain
\begin{multline} \label{eq w 1}
\int^r \rho \Ccal_n'(\rho) \Dcal_n'(\rho) d\rho + \int^r \Big( \rho + \frac{n^2}{\rho} \Big) \Ccal_n(\rho) \Dcal_n(\rho) d\rho = \\ 
= r \Ccal_n'(r) \Dcal_n(r) + \frac{r^2}{2} \Big( 2 \Ccal_n(r) \Dcal_n(r) - \Ccal_{n-1}(r) \Dcal_{n+1}(r) -  \Ccal_{n+1}(r) \Dcal_{n-1}(r) \Big) .
\end{multline}
By using the recurrence relations 10.6.2 in \cite{OLBC},  we find that 
\begin{equation*}
\Ccal_{n-1}(r) \Dcal_{n+1}(r) +  \Ccal_{n+1}(r) \Dcal_{n-1}(r) = 2 \Ccal_n'(r) \Dcal_n'(r) + 2 \frac{n^2}{r^2} \Ccal_n(r)\Dcal_n(r),
\end{equation*}
and from \eqref{eq w 1} we conclude.
\end{proof}

Now we are ready to find estimates of the rate of convergence of the solution of the approximating problem to the exact solution.

\begin{theorem} \label{thm main}
Let $N\in\NN$ and $R_*>R_0$ be fixed. Let $\psi$ and $u_{\AAA_R}$ be the solutions of \eqref{helm f rad cond} and \eqref{Pb 1}, respectively, and assume that
\begin{equation*}
    f(\om)=\sum_{n=-N}^N f_n e^{i n \om},
\end{equation*}
with $\om \in [0,2\pi]$. Then
\begin{equation} \label{asympt 1}
\|\psi - u_{\AAA_R}\|_{H^1(\AAA_{R_*})} = \OO(R^{-1}),
\end{equation}
and
\begin{equation} \label{asympt 2}
\|\psi - u_{\AAA_R}\|_{H^1(\AAA_{R})} = \OO(R^{-1/2}),
\end{equation}
as $R\to + \infty$.
\end{theorem}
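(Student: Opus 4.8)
The plan is to exploit the explicit Fourier representation of $v_{\AAA_R} = u_{\AAA_R} - \psi$ supplied by Theorem \ref{theorem v exact}. Since $f$ has only finitely many nonzero Fourier coefficients ($f_n = 0$ for $|n| > N$), the series \eqref{v B_R exact} collapses to a finite sum $v_{\AAA_R}(r,\om) = \sum_{|n|\le N} v_n^R \eta_n(kr) e^{in\om}$, with $v_n^R = -f_n \gamma_n^R / c_n^R$. The entire theorem then rests on one quantitative fact: that $|v_n^R| = \OO(R^{-1})$ for each $n$. Everything else is a matter of combining this decay with the algebraic growth of the annulus $\AAA_R$.

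First I would pin down the large-$R$ asymptotics of $c_n^R$ and $\gamma_n^R$. Under the substitution $t = k\rho$, both \eqref{c_nR} and the integral in \eqref{gamma_nR} take the form treated in Lemma \ref{lemma tecnico bessel}, so each integral can be evaluated as a boundary term. For $c_n^R$ one takes $\Ccal_n = \Dcal_n = \eta_n$; the lower endpoint contributes only a fixed constant because $\eta_n(kR_0) = 0$ by \eqref{psi n def}, and the large-argument expansions of $J_n$ and $Y_n$ give $\eta_n(r)^2 + \eta_n'(r)^2 \sim \frac{2}{\pi r}|\Hnu(kR_0)|^2$, whence
\begin{equation*}
c_n^R = \frac{2k}{\pi}|\Hnu(kR_0)|^2\, R + \OO(1), \qquad R \to +\infty.
\end{equation*}
Since $\Hnu(kR_0) \ne 0$ (the Wronskian $J_n Y_n' - J_n' Y_n = \frac{2}{\pi r}$ prevents common real zeros), this yields the lower bound $c_n^R \ge c_0 R$ for large $R$.

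The crux is the estimate on $\gamma_n^R$. Applying Lemma \ref{lemma tecnico bessel} with $\Ccal_n = \Hnu$ and $\Dcal_n = \eta_n$ and inserting the asymptotics of $\Hnu$, $J_n$, $Y_n$, the dominant boundary term at $r = kR$ evaluates — after the simplification $\sin\chi + i\cos\chi = ie^{-i\chi}$, with $\chi = r - n\pi/2 - \pi/4$ — to $-\frac{2ikR}{\pi}\Hnu(kR_0) + \OO(1)$. Dividing by $\Hnu(kR_0)$, this contributes exactly $-\frac{2}{\pi}kiR$, which cancels the explicit term $\frac{2}{\pi}ki(R - R_0)$ carried in \eqref{gamma_nR}. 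This cancellation is the heart of the argument: it is precisely what reduces $\gamma_n^R$ from size $\OO(R)$ to $\gamma_n^R = \OO(1)$, and hence what turns mere boundedness of $v_n^R$ into genuine decay $|v_n^R| = |f_n|\,|\gamma_n^R|/c_n^R = \OO(R^{-1})$. I expect verifying this cancellation — tracking the oscillatory boundary contributions and confirming that the surviving $\OO(R)$ terms truly annihilate — to be the main technical obstacle, since without it no convergence follows at all.

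With $|v_n^R| = \OO(R^{-1})$ in hand, both estimates follow. For \eqref{asympt 1}, write $v_{\AAA_R} = \sum_{|n|\le N} v_n^R g_n$ with $g_n(r,\om) = \eta_n(kr) e^{in\om}$ fixed functions independent of $R$; since $\AAA_{R_*}$ is fixed, the triangle inequality gives $\|v_{\AAA_R}\|_{H^1(\AAA_{R_*})} \le (\max_n |v_n^R|) \sum_{|n|\le N} \|g_n\|_{H^1(\AAA_{R_*})} = \OO(R^{-1})$. For \eqref{asympt 2} the domain grows, so I would compute the two pieces of the squared norm directly by Parseval: by Lemma \ref{lemma herm prod} the Dirichlet part equals $\sum_{|n|\le N} |v_n^R|^2 \int_{R_0}^R [\rho k^2 \eta_n'(k\rho)^2 + \frac{n^2}{\rho}\eta_n(k\rho)^2]\, d\rho \le \sum_n |v_n^R|^2 c_n^R$, while the $L^2$ part equals $2\pi \sum_{|n|\le N} |v_n^R|^2 \int_{R_0}^R \rho\, \eta_n(k\rho)^2\, d\rho$. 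Both integrals are $\OO(R)$ (the integrand $\rho\,\eta_n(k\rho)^2$ is bounded), so each piece is $\OO(R^{-2})\cdot\OO(R) = \OO(R^{-1})$, giving $\|v_{\AAA_R}\|_{H^1(\AAA_R)} = \OO(R^{-1/2})$. The gap between the two rates is exactly the trade-off between the $R^{-1}$ decay of the coefficients and the factor $R$ gained by integrating over the expanding annulus.
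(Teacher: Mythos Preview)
Your proposal is correct and follows essentially the same route as the paper: both arguments use Theorem~\ref{theorem v exact} and Lemma~\ref{lemma tecnico bessel} to obtain $c_n^R \sim \frac{2k}{\pi}|\Hnu(kR_0)|^2 R$ and $\gamma_n^R = \OO(1)$, hence $v_n^R = \OO(R^{-1})$, and then combine this with the $\OO(R)$ growth of $\int (\eta_n^2 + \eta_n'^2)\,r\,dr$ to separate the two rates. Your emphasis on the cancellation between the explicit term $\frac{2}{\pi}ki(R-R_0)$ and the leading boundary contribution $-\frac{2ikR}{\pi}$ is exactly the mechanism the paper exploits, though the paper simply records the resulting $\OO(1)$ expansion of $\gamma_n^R$ without isolating that step.
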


\begin{proof}
It will be enough to estimate the rate of convergence of the solution of $v_{\AAA_R}$ to zero, which clearly gives the desired $H^1$ estimate of the difference between the exact and the approximating solutions, as follows from \eqref{v minimiz def}.

Let $n \in \NN$ be fixed and let $c_n^R$ and $\gamma_n^R$ be given by \eqref{c_nR} and \eqref{gamma_nR}.
We use Theorem \ref{theorem v exact}, Lemma \ref{lemma tecnico bessel}, and the asymptotic formulas in Section 10.17 in \cite{OLBC} and we find the following asymptotic expansions for $R \to +\infty$:
\begin{equation*}
c_n^R = \frac{2kR}{\pi} \Big[ J_n(k R_0)^2  + Y_n(k R_0)^2 \Big]   + O(1),
\end{equation*}
and
\begin{multline*}
\frac{\pi}{2}\gamma_n^R = -ikR_0 -k^2  R_0^2 \frac{H_n^{(1)'} (kR_0)}{H_n^{(1)}(kR_0)} \eta_n'(kR_0) + \\
+ \frac{e^{i\chi_n}}{H_n^{(1)}(kR_0)} \Big[ \alpha_n H_n^{(2)}(kR_0) e^{i\chi_n} + i \Big(Y_n(kR_0)\cos \chi_n - J_n(kR_0) \sin \chi_n \Big) \Big] + \\ + O(R^{-1}), 
\end{multline*}
where 
$$
\alpha_n=\frac{(4n^2-1)(n^2-1)}{2},
$$
and 
$$\chi_n=kR - \Big(n + \frac 1 2\Big) \frac \pi 2 .$$
In particular we have that
\begin{equation} \label{ga su c}
\frac{\gamma_n^R}{c_n^R} = O(R^{-1}), \quad \textmd{as } \ R \to + \infty.
\end{equation}
Since $R_*$ is fixed, then there exists a constant $C$, not depending on $R$ such that 
$$
\|\eta_n\|_{H^1(\mathcal{A}_{R_*})} \leq C,
$$
for every $n $, and hence it is clear that \eqref{ga su c} implies \eqref{asympt 1}. To prove \eqref{asympt 2} we notice that formula 10.22.5 in \cite{OLBC} implies that
$$
\int_0^R (\eta_n(r)^2 + \eta_n'(r)^2) r \, dr = O(R), \quad \textmd{as } \ R \to +\infty,
$$
and from \eqref{ga su c} we conclude.
\end{proof}

\begin{remark}
In Theorem \ref{thm main} we assumed that the source $f$ can be expressed in terms of a finite sum of Fourier coefficients, which is the most interesting case for the numerical computations. For a general $f \in L^2$ it is not clear whether Theorem \ref{thm main} holds. Indeed, the error bounds that we used for the asymptotic expansions of cylindric functions may be not sufficient to guarantee the convergence (see Section 10.17(iii) in \cite{OLBC}) and a more refined argument is probably needed to estimate the rate of convergence of the approximating solution. 
\end{remark}

\bibliographystyle{amsalpha}

\end{document}